\newtheorem{theorem}{Theorem}[section]
\newtheorem{lemma}[theorem]{Lemma}
\newtheorem{problem}[theorem]{Problem}
\newtheorem{e-proposition}[theorem]{Proposition}
\newtheorem{e-definition}[theorem]{Definition\rm}
\newtheorem{example}{\it Example\/}
\newtheorem{theoreme}{Th\'eor\`eme}[section]
\newtheorem{proposition}[theoreme]{Proposition}
\newcommand {\cC} {\mathcal C}
\newcommand {\bC} {\mathbb {C}}
\newcommand {\Ga} {\Gamma}
\newtheorem{THEO}{Theorem}
\newtheorem{CONJ}{Conjecture}
\def\og{\leavevmode\raise.3ex\hbox{$\scriptscriptstyle\langle\!\langle$~}}
\def\fg{\leavevmode\raise.3ex\hbox{~$\!\scriptscriptstyle\,\rangle\!\rangle$}}
\begin{document}

\title{Generalizing Tran's conjecture}

\selectlanguage{english}

\author[R.~B\o gvad]{Rikard B\o gvad}
\address{Stockholm University, Department of Mathematics, SE-106 91
Stockholm, Sweden}
\email {rikard@math.su.se }

\author[I.~Ndikubwayo]{Innocent Ndikubwayo}
\address{Stockholm University, Department of Mathematics, SE-106 91
Stockholm, Sweden}
\email{innocent@math.su.se}

            \author[B.~Shapiro]{Boris Shapiro}


\address{Stockholm University, Department of Mathematics, SE-106 91
Stockholm, Sweden}

\email{shapiro@math.su.se}



\begin{abstract}
A conjecture of Khang Tran \cite{Tr1} claims that for an arbitrary pair of polynomials $A(z)$ and $B(z)$, every zero of every polynomial in the sequence $\{P_n(z)\}_{n=1}^\infty$ satisfying the three-term recurrence relation of length $k$
$$P_n(z)+B(z)P_{n-1}(z)+A(z)P_{n-k}(z)=0
$$
with the standard initial conditions $P_0(z)=1$, $P_{-1}(z)=\dots=P_{-k+1}(z)=0$ which is  not a zero of $A(z)$ 
lies on the real (semi)-algebraic  curve  $\cC\subset \bC$ given by 
$$\Im \left(\frac{B^k(z)}{A(z)}\right)=0\quad {\rm and}\quad 0\le (-1)^k\Re \left(\frac{B^k(z)}{A(z)}\right)\le \frac{k^k}{(k-1)^{k-1}}.$$
In this short note, we  show that for  the recurrence relation (generalizing the latter recurrence of Tran)  given by 
$$P_n(z)+B(z)P_{n-\ell}(z)+A(z)P_{n-k}(z)=0, 
$$
 with coprime $k$ and $\ell$ and the same standard initial conditions as above,  
every  root of $P_n(z)$ which is  not a zero of $A(z)B(z)$ belongs to the real algebraic curve $\cC_{\ell,k}$ given by 
$$\Im \left(\frac{B^k(z)}{A^\ell(z)}\right)=0.$$
\end{abstract}

\maketitle

\section{Basic notions and main result}
\label{sec:int}
Linear recurrence relations with various types of coefficients have been studied for more than a century and  appear in different contexts throughout the whole body of mathematics. 
  A natural univariate set-up in this area is as follows. 

\medskip
Let   $\{P_n(z)\}$ be a polynomial sequence satisfying a finite linear recurrence relation of the form 
\begin{equation}\label{eq:main}
P_n(z)+Q_1(z)P_{n-1}(z)+Q_2(z)P_{n-2}(z)+\dots + Q_k(z)P_{n-k}(z)=0,  
\end{equation}
with polynomials coefficients $Q_1(z), \dots, Q_k(z)$ and  initial polynomial conditions of the form $P_0(z)=p_0(z),\; P_{-1}(z)=p_{-1}(z), \dots,$ and $P_{-k+1}(z)=p_{-k+1}(z).$ 

\begin{problem}\label{prob:stand} In the above notation, describe the (asymptotic) behavior of the roots for polynomials in   $\{P_n(z)\}$. 
\end{problem}   

A major result related to Problem~\ref{prob:stand} has been proven in \cite{BKW1}, \cite{BKW2}. It states that independently of the initial conditions, the sequence  
$\{\mu_n\}$ of the root-counting measures of   $\{P_n(z)\}$ converges in the weak sense to the measure $\mu_{\overline Q}$ supported on $\Ga_{\overline Q}$, where $\overline Q=(Q_1(z), Q_2(z),\dots , Q_k(z))$ and $\Ga_{\overline Q}$ is defined as follows. Consider the symbol equation of (\ref{eq:main}) given by 
\begin{equation}\label{eq:char}
t^k+Q_1(z)t^{k-1}+\dots + Q_k(z)=0.
\end{equation}
For a given fixed $z\in \bC$, let $t_1(z)\ge t_2(z)\ge \dots \ge t_k(z)$ be the $k$-tuple of the absolute values of all  (not necessarily distinct) roots of (\ref {eq:char}) in the non-increasing order. Finally, define $\Ga_{\overline Q}$ as: 
\begin{equation}\label{eq:Ga}
\Ga_{\overline Q}:= \{z\in \bC\;  |\;  t_1(z)=t_2(z)\}.
\end{equation}

The density of $\mu_{\overline Q}$ can be also determined using (\ref{eq:char})  and (\ref{eq:Ga}), but we will not need this expression below.  
 
 In the majority of the situations, the roots of $P_n(z)$ only tend to the Beraha-Kahane-Weiss curve $\Ga_{\overline Q}$ when $n\to \infty$ and it is difficult to say something about their location for finite $n$.   The only general exception from this rule is probably the case when all polynomials in $\{P_n(z)\}$ are real-rooted which is often discussed in the  literature and  important for applications.  
 
 However in \cite{Tr1} Khang Tran was able to find a non-trivial situation in which the roots of $P_n(z)$ are not necessarily real and  (almost all of them) still lie on the limiting curve $\Ga_{\overline Q}$ for all $n$.  Observe that this property is destroyed by  small generic deformations of coefficients/initial polynomials. 
 
 \medskip
 Namely, Conjecture 6 of \cite {Tr1}  claims the following. 

\smallskip
\begin{CONJ}\label{conj:Tran}
For an arbitrary pair of polynomials $A(z)$ and $B(z)$, all  zeros of every polynomial in the sequence $\{P_n(z)\}_{n=1}^\infty$ satisfying the three-term recurrence relation of length $k$
\begin{equation}\label{eq:tran}
P_n(z)+B(z)P_{n-1}(z)+A(z)P_{n-k}(z)=0
\end{equation}
with the standard initial conditions $P_0(z)=1$, $P_{-1}(z)=\dots=P_{-k+1}(z)=0$ which do satisfy $A(z)\neq 0$  
lie on the real (semi)-algebraic  curve  $\cC\subset \bC$ given by 
\begin{equation}\label{eq:trcurve}
\Im \left(\frac{B^k(z)}{A(z)}\right)=0\quad {\rm and}\quad 0\le (-1)^k\Re \left(\frac{B^k(z)}{A(z)}\right)\le \frac{k^k}{(k-1)^{k-1}}.
\end{equation}
Moreover, these roots become dense in $\cC$ when $n\to \infty$.
\end{CONJ}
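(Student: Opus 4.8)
The plan is to turn the vanishing condition $P_n(z)=0$ into a one–variable problem in the single quantity $X:=B^k(z)/A^\ell(z)$ and then to reduce the statement to the real-rootedness of an explicit family of polynomials. First I would record the generating function: summing the recurrence with the stated initial conditions gives
\[
\sum_{n\ge 0}P_n(z)\,w^n=\frac{1}{1+B(z)\,w^{\ell}+A(z)\,w^{k}},
\]
where $\ell=1$ in Tran's original recurrence. Expanding the right-hand side as a geometric series and extracting the coefficient of $w^n$ yields the closed form
\[
P_n(z)=\sum_{\substack{i,j\ge 0\\ \ell i+kj=n}}(-1)^{i+j}\binom{i+j}{i}B(z)^{i}A(z)^{j}.
\]

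Next I would exploit $\gcd(k,\ell)=1$. The solutions of $\ell i+kj=n$ in non-negative integers form a single chain $(i_0+ks,\ j_0-\ell s)$, $s\ge 0$, generated from the minimal solution $(i_0,j_0)$. Factoring out the nonzero quantity $(-1)^{i_0+j_0}B^{i_0}A^{j_0}$ (here I use $A(z)B(z)\neq 0$), consecutive terms differ exactly by a power of $X=B^k/A^\ell$, so that
\[
P_n(z)=(-1)^{i_0+j_0}B^{i_0}A^{j_0}\,g_n\!\left(\frac{B^k}{A^\ell}\right),\qquad g_n(X)=\sum_{s\ge 0}(-1)^{(k-\ell)s}\binom{i_0+j_0+(k-\ell)s}{i_0+ks}X^{s}\in\bZ[X].
\]
Thus, away from the zeros of $AB$, one has $P_n(z)=0$ if and only if $g_n(X(z))=0$, where $g_n$ has real (indeed integer) coefficients. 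Since for arbitrary $A,B$ the rational function $X(z)=B^k/A^\ell$ attains essentially every complex value, the assertion that such roots satisfy $\Im(B^k/A^\ell)=0$ is \emph{equivalent} to the purely algebraic statement that every $g_n$ is real-rooted.

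The main obstacle is precisely this real-rootedness of $g_n$. I would attempt it in one of two ways. The direct analytic route generalizes Tran: parametrize the two roots of smallest modulus of the trinomial symbol $t^{k}+B\,t^{k-\ell}+A$ (equivalently, of $1+Bw^\ell+Aw^k$) on the locus $X\in\mathbb{R}$, and show that the vanishing of $P_n$ there is governed by an equation of the shape $\sin\!\big((n+c)\varphi\big)=0$, which forces real solutions confined to an explicit interval. The more structural route is to establish a short recurrence for the family $\{g_n\}$ (in $n$, or in the chain parameter $s$) and to run a Hermite–Biehler / interlacing induction, exhibiting consecutive members as an interlacing pair; coprimality of $k$ and $\ell$ is exactly what makes the bookkeeping of the chain $(i_0+ks,\,j_0-\ell s)$ close up into such a recurrence. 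Either way, real-rootedness of $g_n$ immediately gives $X(z)\in\mathbb{R}$, i.e. membership in $\Im(B^k/A^\ell)=0$, proving the range-free statement for all coprime $(\ell,k)$.

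Finally, to recover the full Conjecture~\ref{conj:Tran} (the case $\ell=1$) I would read off the two remaining assertions from the location of the real roots of $g_n$. The explicit interval containing these roots translates, after accounting for signs, into the bound $0\le(-1)^k\Re(B^k/A)\le k^k/(k-1)^{k-1}$, with the endpoint $k^k/(k-1)^{k-1}$ emerging as the value of $X$ at which the two dominant roots of the symbol coincide (a double root of the trinomial). Density of the zeros in $\cC$ as $n\to\infty$ would then follow from the same interlacing, since an interlacing family of real roots trapped in a fixed interval equidistributes as the degree grows, consistent with weak convergence of the root-counting measures to $\mu_{\overline Q}$.
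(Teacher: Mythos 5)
Your reduction is exactly the paper's: the generating function, the closed form for $P_n(z)$, the single chain of solutions of $\ell i+kj=n$ coming from $\gcd(k,\ell)=1$, and the factorization $P_n=\pm B^{i_0}A^{j_0}\,g_n\bigl(B^k/A^\ell\bigr)$ reproduce Lemma~\ref{lm:new1} and the computation in the proof of Theorem~\ref{th:main} (your $g_n(X)$ is $G_{\ell,k,n}\bigl((-1)^{k-\ell}X\bigr)$ in the paper's notation). But you stop at what you yourself call ``the main obstacle'': the real-rootedness of $g_n$, and neither of your two sketched routes is carried out. This is not a routine verification. The statement that $\sum_u \binom{i_u+j_u}{i_u}\tau^{u-1}$, the generating polynomial of the binomial coefficients along the lattice line $i\ell+jk=n$, has only real (negative) roots was a conjecture of Su and Wang, proven by Yu \cite{Yu}; the paper's entire proof consists of the reduction you gave plus a citation of precisely this result (Theorem~\ref{th:interval}). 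Your route (a), the $\sin\bigl((n+c)\varphi\bigr)$ analysis, is what Tran did for $\ell=1$, and even there it gave the conjecture only for $k\le 4$ in \cite{Tr1} and for large $n$ in \cite{Tr2}; your route (b), an interlacing induction on a short recurrence for the $g_n$, is plausible but unproven. So the core algebraic ingredient of the argument is missing, and it is exactly the hard part.

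A separate caveat: the statement you were asked to prove is the full Conjecture~\ref{conj:Tran}, including the inequality $0\le(-1)^k\Re\bigl(B^k(z)/A(z)\bigr)\le k^k/(k-1)^{k-1}$ and the density of the zeros in $\cC$. Your last paragraph only gestures at these (``translates, after accounting for signs,'' ``equidistributes''); knowing that $g_n$ has negative roots does not by itself locate those roots in the interval that produces the stated bound, and density requires an argument about how the roots of $g_n$ fill that interval as $n\to\infty$. To be fair, the paper does not prove these parts either: Theorem~\ref{th:main} settles only the curve-membership half, the inequality is explicitly left as an open problem in the final remarks (with partial results in \cite{Ndi}), and density is not addressed. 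So on the part of the conjecture the paper actually proves, your skeleton is correct but the key lemma is missing; on the remaining parts, your sketch would need substantially more than what you wrote.
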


One can check that  in this specific case, the latter curve $\cC$ given by  (\ref{eq:trcurve})  is exactly the Beraha-Kahane-Weiss curve $\Ga_{\overline Q}$. 
In \cite{Tr1} Conjecture~\ref{conj:Tran} was proven for $k=2, 3, 4$ and in \cite{Tr2} Conjecture~\ref{conj:Tran} was proven for arbitrary $k$, but only for polynomials $P_n(z)$ with sufficiently large $n$. Several other aspects of this problem are discussed in \cite{FoTr}, \cite{TrZu1}, \cite{TrZu2}.  The purpose of this short note is to generalize  and  settle the first part of Conjecture~\ref{conj:Tran}. 

We note that in the case when $k$ and $\ell$  are not coprime, say $k=dk', \ell=d\ell',$ gcd$(k',\ell')=1$ 
and \begin{eqnarray*}\sum_{n=0}^\infty P_n(z)t^n=\frac{1}{1+B(z)t^{\ell'}+A(z)t^{k'}},\end{eqnarray*}
we obtain
\begin{eqnarray*}
\sum_{n=0}^\infty Q_n(z)t^n&=&\frac{1}{1+B(z)t^{\ell}+A(z)t^{k}}\\ &=& \frac{1}{1+B(z)t^{d\ell'}+A(z)t^{dk'}}  \\ &=& \sum_{n=0}^\infty P_n(z)t^{dn}.
\end{eqnarray*}
Thus it suffices to study the zero distribution in the case gcd$(k,\ell)=1$. 

\medskip
\begin{theorem}\label{th:main}
For an arbitrary pair of polynomials $A(z)$ and $B(z)$, all  zeros of every polynomial in the sequence $\{P_n(z)\}_{n=1}^\infty$ satisfying the three-term recurrence relation of length $k$
\begin{equation}\label{eq:general}
P_n(z)+B(z)P_{n-\ell}(z)+A(z)P_{n-k}(z)=0
\end{equation}
with coprime $k$ and $\ell$  and with the standard initial conditions $P_0(z)=1$, $P_{-1}(z)=\dots=P_{-k+1}(z)=0$ which  satisfy the condition $A(z)B(z)\neq 0$  
lie on the real algebraic  curve   given by 
\begin{equation}\label{eq:generalcurve}
\Im \left(\frac{B^k(z)}{A^\ell(z)}\right)=0.
\end{equation}
\end{theorem}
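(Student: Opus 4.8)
The plan is to reduce the two–variable statement to a one–variable real–rootedness question through the generating function, so that the whole theorem collapses to a single hyperbolicity lemma.

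First I would record the generating function. Summing the recurrence (\ref{eq:general}) against $t^n$ and using the standard initial conditions gives
\[
\sum_{n\ge 0}P_n(z)\,t^n=\frac{1}{1+B(z)t^{\ell}+A(z)t^{k}} .
\]
Expanding the right–hand side as a geometric series and collecting the coefficient of $t^n$ yields the explicit closed form
\[
P_n(z)=\sum_{\substack{a,b\ge 0\\ \ell a+k b=n}}(-1)^{a+b}\binom{a+b}{a}B(z)^{a}A(z)^{b}.
\]
Now I would exploit the coprimality of $k$ and $\ell$. Fix the solution $(a_0,b_0)$ of $\ell a+kb=n$ with $0\le a_0<k$; every other nonnegative solution is then $(a_0+kt,\,b_0-\ell t)$ for $t=0,1,\dots$ as long as $b_0-\ell t\ge 0$. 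Writing $w:=B^{k}/A^{\ell}$ we have $B^{a}A^{b}=B^{a_0}A^{b_0}w^{\,t}$, hence
\[
P_n(z)=(-1)^{a_0+b_0}B(z)^{a_0}A(z)^{b_0}\,R_n\!\left(\tfrac{B^{k}(z)}{A^{\ell}(z)}\right),\qquad
R_n(w):=\sum_{t\ge 0}(-1)^{(k-\ell)t}\binom{a_0+b_0+(k-\ell)t}{a_0+kt}\,w^{\,t},
\]
where $R_n$ is a polynomial in the single variable $w$ with integer (hence real) coefficients. At a root $z$ with $A(z)B(z)\neq 0$ the prefactor is nonzero, so $P_n(z)=0$ forces $R_n(w(z))=0$. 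Therefore the theorem follows at once from the following \emph{Hyperbolicity Lemma: every zero of the real polynomial $R_n$ is real.} Indeed, granting this, any admissible root $z$ of $P_n$ has $w(z)=B^{k}(z)/A^{\ell}(z)$ equal to a real zero of $R_n$, i.e. $\Im\big(B^{k}(z)/A^{\ell}(z)\big)=0$, which is precisely (\ref{eq:generalcurve}). Since $A,B$ are arbitrary, the value of $w$ at a root cannot be constrained beyond lying in the zero set of $R_n$, which is why genuine hyperbolicity, and not merely realness of the coefficients, is needed.

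To attack the lemma I would first normalize the symbol. The scaling $t=\lambda u$ with $\lambda^{k-\ell}=B/A$ turns the denominator into $1+c\,(u^{\ell}+u^{k})$ with $c^{\,k-\ell}=w$, and a short computation gives $[u^{n}]\big(1+c(u^{\ell}+u^{k})\big)^{-1}=(-1)^{a_0+b_0}c^{\,a_0+b_0}R_n(w)$; since $c\neq 0$, the zeros of $R_n$ are exactly the values $w=c^{\,k-\ell}$ for which this coefficient vanishes. For simple symbol–roots $u_1,\dots,u_k$, using $c\,u_i^{\ell}(1+u_i^{k-\ell})=-1$ one obtains the residue representation
\[
[u^{n}]\,\frac{1}{1+c(u^{\ell}+u^{k})}=\sum_{i=1}^{k}\frac{1+u_i^{\,k-\ell}}{\big(\ell+k\,u_i^{\,k-\ell}\big)\,u_i^{\,n}} ,
\]
and the task becomes to show that this sum vanishes only when $c^{\,k-\ell}=w$ is real. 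The model case $k=2,\ \ell=1$ is classical: there $R_n$ is, up to normalization, a Chebyshev (Fibonacci) polynomial and is manifestly real–rooted. For general coprime $(k,\ell)$ I would prove hyperbolicity by a trigonometric substitution parametrizing the pair of symbol–roots of largest modulus by an angular variable, rewriting the vanishing condition as that of a $\sin$–type expression whose solutions form a discrete real set of $w$–values, and then matching the number of these values with $\deg R_n$ via an argument–principle (winding–number) count, so that all zeros are accounted for and real.

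The main obstacle is precisely this Hyperbolicity Lemma. The passage from the generating function to the factorized form $P_n=(\text{nonzero})\cdot R_n(B^{k}/A^{\ell})$ is purely formal, whereas proving that $R_n$ has only real zeros for all coprime $(k,\ell)$ and all $n$ is the genuine analytic content of the theorem. I would note finally that this is strictly weaker than Tran's original conjecture: there one must in addition locate the (real) zeros of $R_n$ inside the interval $[0,\,k^{k}/(k-1)^{k-1}]$ after the sign normalization, whereas here no interval constraint is claimed, so establishing membership in the line $\Im(B^{k}/A^{\ell})=0$ alone suffices.
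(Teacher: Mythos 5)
Your reduction is exactly the one the paper uses: the generating function $\bigl(1+B(z)t^{\ell}+A(z)t^{k}\bigr)^{-1}$ gives the binomial sum for $P_n$ (the paper's Lemma~\ref{lm:new1}), coprimality of $k$ and $\ell$ lets you parametrize the lattice points on $\ell a+kb=n$ by a single integer $t$, and $P_n$ factors as a nonvanishing monomial in $A,B$ times a univariate real polynomial evaluated at $B^{k}/A^{\ell}$; your $R_n(w)$ is the paper's $G_{\ell,k,n}\bigl((-1)^{k-\ell}w\bigr)$. Up to this point your argument is complete, correct, and coincides with the paper's proof, including the observation that real-rootedness (not just realness of coefficients) is what is needed.

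The gap is the step you yourself flag as the main obstacle: the Hyperbolicity Lemma. Real-rootedness of $R_n$ is precisely the paper's Theorem~\ref{th:interval}, the statement that the generating polynomial of binomial coefficients along a lattice line has only negative real zeros. This is not something one gets by a routine computation: it was conjectured by Su and Wang \cite{SW}, proved by Yu \cite{Yu}, and the paper closes its proof by citing Yu's theorem rather than reproving it. Your proposed route --- the residue representation of $[u^{n}]\bigl(1+c(u^{\ell}+u^{k})\bigr)^{-1}$, a trigonometric parametrization of the two symbol roots of largest modulus, and an argument-principle count matched against $\deg R_n$ --- is essentially Tran's method from \cite{Tr1}, \cite{Tr2}; as the paper points out, that method has so far produced the conclusion only for small $k$ (all $n$) or for all $k$ but sufficiently large $n$, because accounting for \emph{every} zero at \emph{every} finite $n$ by a winding-number count is exactly the hard part. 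So, as written, the proposal does not prove the theorem; it becomes a complete proof, identical to the paper's, the moment the sketch of the Hyperbolicity Lemma is replaced by a citation of Yu's result.
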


Initial results in this direction together with the inequality determining which part of the curve  given by \eqref{eq:generalcurve} contain the roots of $\{P_n(z)\}$ can be found in a recent paper  \cite{Ndi}   of the second author. 

\medskip
\noindent
{\bf Acknowledgements.} The second author  acknowledges the financial
support provided by Sida Phase-IV bilateral program with Makerere University 2015-2020 under project 316 `Capacity building in Mathematics and its applications'.  The  third author wants to acknowledge the financial support of his research provided by the Swedish Research Council grant  2016-04416.

\section{Proofs}

\begin{lemma}\label{lm:new1} In notation of Theorem~\ref{th:main}, 
\begin{equation}\label{eq:imp}
P_n(z)=\sum_{\substack{i\ge 0, j\ge 0 \\ i\ell+jk=n}}(-1)^{i+j}\binom {i+j}{i} A^j(z) B^i(z).
\end{equation}
\end{lemma}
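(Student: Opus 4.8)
The plan is to pass to generating functions. For a fixed $z$, treat $A=A(z)$ and $B=B(z)$ as constants and introduce the formal power series $F(t)=\sum_{n\ge 0}P_n(z)t^n$ in an auxiliary variable $t$. Multiplying the recurrence \eqref{eq:general} by $t^n$, summing over $n$, and using the initial conditions $P_0=1$, $P_{-1}=\dots=P_{-k+1}=0$ together with the convention $P_m=0$ for $m<0$, the recurrence forces the right-hand side to reduce to the constant $1$, so that
\begin{equation*}
\left(1+Bt^{\ell}+At^{k}\right)F(t)=1, \qquad F(t)=\bigl(1+Bt^{\ell}+At^{k}\bigr)^{-1}.
\end{equation*}
This is precisely the generating function already written down in the displayed computation preceding the statement of Theorem~\ref{th:main}, so one may also simply invoke it there.

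Next I would expand $F(t)$ as a geometric series and apply the binomial theorem. Since $Bt^{\ell}+At^{k}$ has no constant term, the expansion
\begin{equation*}
F(t)=\sum_{m\ge 0}(-1)^{m}\bigl(Bt^{\ell}+At^{k}\bigr)^{m}
\end{equation*}
is legitimate at the level of formal power series, and for each $m$ the binomial theorem gives
\begin{equation*}
\bigl(Bt^{\ell}+At^{k}\bigr)^{m}=\sum_{i+j=m}\binom{i+j}{i}B^{i}A^{j}\,t^{i\ell+jk}.
\end{equation*}
Substituting $m=i+j$ collapses the double sum into
\begin{equation*}
F(t)=\sum_{i\ge 0,\,j\ge 0}(-1)^{i+j}\binom{i+j}{i}A^{j}B^{i}\,t^{i\ell+jk},
\end{equation*}
and extracting the coefficient of $t^n$ yields exactly \eqref{eq:imp}.

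The only point requiring genuine care is the derivation of the generating function identity, namely checking that for the small indices $1\le n<k$ (where one or both of the shifted indices $n-\ell$, $n-k$ are negative) the inhomogeneous boundary terms conspire with the initial conditions to leave no contribution beyond the constant $1$. A short case analysis on the range of $n$ settles this, and it is the only fiddly bookkeeping in the argument. I should stress that coprimality of $k$ and $\ell$ plays no role in this lemma and is only needed for the subsequent geometric statements.

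As an alternative one can prove \eqref{eq:imp} directly by induction on $n$: writing $R_n$ for the right-hand side, one verifies $R_n+BR_{n-\ell}+AR_{n-k}=0$ by reindexing the two shifted sums ($i\mapsto i-1$ in the $B$-term and $j\mapsto j-1$ in the $A$-term) and invoking Pascal's rule $\binom{i+j-1}{i-1}+\binom{i+j-1}{i}=\binom{i+j}{i}$, the boundary cases $i=0$ and $j=0$ being inspected separately; since $R_n$ then satisfies the same recurrence and the same initial conditions as $P_n$, the two coincide. I would present the generating-function version as the main proof, as it is the shortest and most transparent.
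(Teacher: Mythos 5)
Your proposal is correct and follows essentially the same route as the paper: the paper's proof likewise derives the generating function identity $\sum_{n\ge 0}P_n(z)t^n=\bigl(1+B(z)t^{\ell}+A(z)t^{k}\bigr)^{-1}$ from the recurrence and initial conditions, expands it as a geometric series, and compares coefficients of $t^n$. Your write-up merely spells out the binomial-theorem step and the boundary bookkeeping (and offers an optional induction variant) that the paper leaves implicit.
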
 

\begin{proof} Equation~\eqref{eq:general} together with the standard initial conditions imply that 
\begin{align*}
\sum_{n=0}^\infty P_n(z)t^n&=\frac{1}{1+B(z)t^{\ell}+A(z)t^k}\\&=1 -(B(z)t^\ell+A(z)t^k)+(B(z)t^\ell+A(z)t^k)^2-\cdots
\end{align*}
Comparing the coefficients of $t^n$ on both sides of the above equation gives \eqref{eq:imp}.
\end{proof} 

Let $L:=L_{\ell,k,n}=\{(i,j)\in\mathbb{Z}_{\geqslant 0}^2:i\ell+jk=n\}$. Since the gradient of $x\ell+ky=n$ is negative, the set $L$ is a (possibly empty) finite set, say with $s$ elements. In general, the linear Diophantine equation $x\ell+ky=n$ (with $gcd(k,\ell)=1$) has integer solutions of the form $x=x_0+ku$ and $y=y_0-\ell u$ where $x_0,y_0,u\in \mathbb{Z}$. We can choose an $x_0$ and $y_0$ in a such a way that $L=\{(i_u,j_u):=(x_0+ku,y_0-\ell u)\in\mathbb{Z}_{\geqslant 0}^2:u=1,\ldots,s\}$. For any $1\leq u\leq s$, we have $i_u-i_1=k(u-1)$ and $j_1-j_u=\ell(u-1)$.

Let 
\begin{equation} 
\label{eqn1}G_{\ell,k,n}(\tau):=\sum_{u=1}^{s}\binom{i_u+j_u}{i_u}\tau^{u-1}.
\end{equation}
Then $G_{\ell,k,n}(\tau)$ is the generating function for number of north and east lattice paths from the origin $(0,0)$ to the point $(i_u,j_u)\in L$.  The proof of the next result can be found in \cite{Yu}. See Conjecture $1$ and its proof in Section 2 of the the same paper. (Weaker statements in the same direction can be found in \cite{SW}.)

\begin{THEO}  \label{th:interval} In the above notation, for any given positive integers $\ell,k,n$, $G_{\ell,k,n}(\tau)$ as a polynomial in $\tau$ has only negative roots. \end{THEO}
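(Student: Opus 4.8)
The plan is to first convert the statement about the coefficient sequence $\binom{i_u+j_u}{i_u}$ into a recurrence for the polynomials $G_{\ell,k,n}(\tau)$ themselves, and then to run a real-rootedness induction driven by interlacing. Since every coefficient of $G_{\ell,k,n}(\tau)$ is a positive integer, any real root is automatically negative; hence it suffices to prove that $G_{\ell,k,n}(\tau)$ has only real roots.

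For the first step I would introduce the bivariate generating function $\frac{1}{1-Xs^\ell-Ys^k}=\sum_{n\ge0}F_n(X,Y)s^n$, whose coefficients $F_n(X,Y)=\sum_{i\ell+jk=n}\binom{i+j}{i}X^iY^j$ collect exactly the lattice points on the line $i\ell+jk=n$. Clearing the denominator yields the clean recurrence $F_n=XF_{n-\ell}+YF_{n-k}$ with $F_0=1$ and $F_m=0$ for $m<0$. Because every monomial of $F_n$ has the form $X^{i_1}Y^{j_1}(X^kY^{-\ell})^{u-1}$, one gets $F_n(X,1)=X^{i_1}G_{\ell,k,n}(X^k)$; tracking the base exponent $i_1$, which is the residue of $n\ell^{-1}$ modulo $k$ and hence vanishes exactly when $k\mid n$, turns the recurrence for $F_n$ into
$$G_{\ell,k,n}(\tau)=\tau^{\,\varepsilon_n}\,G_{\ell,k,n-\ell}(\tau)+G_{\ell,k,n-k}(\tau),\qquad \varepsilon_n=\begin{cases}1,& k\mid n,\\ 0,& k\nmid n.\end{cases}$$
(One verifies this directly on small cases; e.g. $\ell=1,\,k=2$ recovers the Fibonacci/Chebyshev-type polynomials $G_n=\tau G_{n-1}+G_{n-2}$ for even $n$, which are classically real-rooted.)

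Given the recurrence, I would prove real-rootedness by induction on $n$, carrying along interlacing relations among the window $G_{\ell,k,n-k+1},\dots,G_{\ell,k,n}$. The mechanism is the standard interlacing toolkit: if $p,q$ are real-rooted with nonpositive roots and admit a common interlacer, then $p+q$ is real-rooted; and multiplication by $\tau$ preserves real-rootedness together with the relevant interlacing relations $g\preceq \tau g$. Packaging the $k$ previous polynomials into a vector and the recurrence into a companion-type matrix with entries $1$ and $\tau^{\varepsilon_n}$, the inductive step reduces to showing that applying this matrix preserves the joint interlacing of the window.

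The main obstacle is precisely this propagation of interlacing through two incommensurate shift lengths $\ell$ and $k$. Unlike a classical three-term recurrence, the indices $n-\ell$ and $n-k$ occupy different positions in the window, the degree of $G_{\ell,k,n}$ grows only intermittently (roughly once per $k\ell$ steps rather than at every step), and the extra factor $\tau$ appears exactly once in each block of $k$ consecutive indices. Here the hypothesis $\gcd(k,\ell)=1$ is essential: it is what guarantees that the single $\tau$-multiplication per period falls in step with the intermittent degree jumps, so that the interlacing invariant closes up after one full period. Choosing the invariant correctly and checking that it is genuinely preserved — rather than settling for log-concavity, which is too weak to force real roots — is the delicate part, and is where I expect essentially all of the work to lie.
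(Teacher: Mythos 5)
Your preparatory steps are correct and worth keeping: since all coefficients of $G_{\ell,k,n}(\tau)$ are positive integers, real-rootedness does imply that all roots are negative; the identity $F_n(X,1)=X^{i_1}G_{\ell,k,n}(X^k)$ with $i_1\equiv n\ell^{-1}\pmod{k}$ is right (using $\gcd(k,\ell)=1$); and the recurrence
$G_{\ell,k,n}(\tau)=\tau^{\varepsilon_n}G_{\ell,k,n-\ell}(\tau)+G_{\ell,k,n-k}(\tau)$ with $\varepsilon_n=1$ exactly when $k\mid n$ follows correctly from $F_n=XF_{n-\ell}+YF_{n-k}$. But the argument stops exactly where the theorem begins. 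The entire mathematical content --- that this recurrence forces real roots --- is replaced by the announcement that one should ``choose the invariant correctly and check that it is genuinely preserved,'' which you yourself identify as where essentially all of the work lies. The common-interlacer lemma you invoke (a sum $p+q$ of real-rooted polynomials is real-rooted when $p$ and $q$ admit a common interlacer) would require you to prove, at every step $n$, that $\tau^{\varepsilon_n}G_{\ell,k,n-\ell}$ and $G_{\ell,k,n-k}$ have a common interlacer; the proposal establishes this for no nontrivial pair, nor does it exhibit an invariant on the window of $k$ previous polynomials that survives shifts by both $\ell$ and $k$. This is not a detail that a reader can routinely fill in: the standard real-rootedness machinery for recursively defined polynomial sequences (e.g.\ the Liu--Wang compatible/interlacing framework) is designed for recurrences linking consecutive terms, and it is precisely the incommensurability of the two shifts --- which you correctly flag --- that prevents it from applying off the shelf. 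It is entirely possible that the naive window invariant fails to close up and a different idea is required; as written, the proposal is a plausible research plan, not a proof.

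For context: the paper itself does not prove Theorem~\ref{th:interval}. It quotes the statement from \cite{Yu}, where it is (the resolution of) Conjecture 1 of Su and Wang \cite{SW}, established there by a genuinely nontrivial argument. So a blind proof of this statement amounts to reproving Yu's theorem in full. Your reduction to the recurrence above is a sensible first move and could serve as a useful lemma, but the heart of the matter is missing.
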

\begin{example} 
\label{Inn} {\rm Take $\ell=2,\, k=3,\; n=21$. Then $L=\{(0,7),\; (3,5),\;(6,3),\;(9,1)\}$. The generating polynomial is 
\begin{align*}
G_{2,3,21}(\tau)=\binom{0+7}{0}+\binom{3+5}{3}\tau+\binom{6+3}{6}\tau^2+\binom{9+1}{9}\tau^3= 1+56\tau+84\tau^2+10\tau^3.
\end{align*}
The roots of this polynomial are approximately equal to $-7.67175,-0.70989$ and $-0.0183618.$}
\end{example}
\medskip\noindent
\begin{proof}[Proof of Theorem~\ref{th:main}] 

For fixed $k,\ell$ and $n$ with $gcd(k,\ell)=1$ and $k>\ell$, as above set $L=\{(i_u,j_u):=(x_0+ku,y_0-\ell u)\in\mathbb{Z}_{\geqslant 0}^2:u=1,\ldots,s\}$ where $(x_0,y_0)$ is a solution to the equation $i\ell+jk=n$.

 From Lemma \ref{lm:new1}, we have
\begin{align*}
P_{n}(z)&=\sum_{u=1}^s (-1)^{i_u+j_u}\binom{i_u+j_u}{i_u}A^{j_u}(z)B^{i_u}(z) \\
&=  A^{j_1}(z)B^{i_1}(z) (-1)^{i_1+j_1}\sum_{u=1}^s (-1)^{i_u+j_u-(i_1+j_1)}\binom{i_u+j_u}{i_u} A^{j_u-j_1}(z)B^{i_u-i_1}(z) \\
&= (-1)^{i_1+j_1}  A^{j_1}(z)B^{i_1}(z)  \sum_{u=1}^s  (-1)^{(u-1)(k-\ell)}\binom{i_u+j_u}{i_u}\frac{B^{(u-1)k}(z)}{ A^{(u-1)\ell}(z)} \\
&= (-1)^{i_1+j_1}  A^{j_1}(z)B^{i_1}(z)  \sum_{u=1}^s \binom{i_u+j_u}{i_u}\Bigg( (-1)^{(k-\ell)}\frac{B^{k}(z)}{ A^{\ell}(z)}\Bigg)^{u-1}\\
&=\pm B^{i_1}(z)A^{j_1}(z)G_{\ell,k,n}\left((-1)^{(k-\ell)}\frac{B^k(z)}{A^\ell(z)}\right).
\end{align*}
The last equality follows from \eqref{eqn1}. 
If $z_0 \in \mathbb{C}$ is such that $P_n(z_0)=0$ and $A(z_0)B(z_0)\neq 0$, then it follows that
$$G_{\ell,k,n}\left((-1)^{(k-\ell)}\frac{B^k(z_0)}{A^\ell(z_0)}\right)=0.$$ By Theorem \ref{th:interval}, we have 
$$\Im \left((-1)^{(k-\ell)}\frac{B^k(z_0)}{A^\ell(z_0)}\right)=0,$$ which completes the proof.
 \end{proof}

\begin{example} Let  $A(z)=z^3 + z + 1, B(z)=z^2 - 2 z + 7, \ell=2, k=3,$ and $n=21$. Then using Example \ref{Inn}, we get
{\small{\begin{align*}
P_{21}(z)&= -A^7 \, G_{2,3,21} \left(-\frac{B^3}{A^2}\right)\\&=-(z^3+z+1)^7 \Big( 1-56\frac{(z^3+z+1)^3}{(z^2-2z+7)^2}+84 \Big( \frac{(z^3+z+1)^3}{(z^2-2z+7)^2} \Big)^2 - 10 \Big( \frac{(z^3+z+1)^3}{(z^2-2z+7)^2} \Big)^3 \Big).
\end{align*}}}
On simplification, we have
\begin{align*}
P_{21}(z)&=393672761 - 646754633 z + 667797557 z^2 + 98239806 z^3 - 
 1206661925 z^4 + \\& 2171467228 z^5 - 2529964192 z^6 + 2246607369 z^7 - 
 1625784860 z^8 + 969712412 z^9 - \\&486724329 z^{10} + 201422869 z^{11} - 
 68243275 z^{12} + 17375116 z^{13} - 2717833 z^{14} -\\& 196756 z^{15} + 
 295748 z^{16} - 114667 z^{17} + 27963 z^{18} - 4619 z^{19} + 492 z^{20} - 
 19 z^{21}.
\end{align*}
This polynomial $P_{21}(z)$ is the same as one given by \eqref{eq:general}  using Mathematica for the same choice of parameters.
\end{example}

In Figure 1 below, we illustrate Theorem \ref{th:main}  for the above generated  polynomial $P_{21}(z)$.

\begin{figure}[h]\begin{center}
$ 
\begin{array}{c}
\includegraphics[height=10cm, width=11cm]{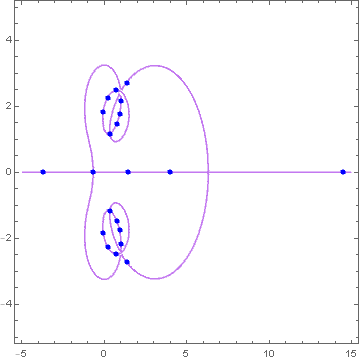}
\end{array}
$
\end{center}
\caption{The graph of  $\Im\left(\frac{B^3(z)}{A^2(z)}\right)=0$ and the zeros of $P_{21}(z)$ on the same axis. }
 \label{a1}

\end{figure}

\section{Final remarks}

\noindent
{\bf 1.} The most important question related to this note is to find the  analog of the inequality in \eqref{eq:trcurve} describing on which part of the real algebraic curve given by \eqref{eq:generalcurve}
the  roots of $\{P_n(z)\}$ are located and become dense when $n\to \infty$. Some special cases are considered in \cite{Ndi}.  

\smallskip
\noindent
{\bf 2.} Choosing an initial $k$-tuple $IN = \{P_0(z),\dots, P_{-k+1}(z)\}$, find other examples/classes of  pairs $(\overline Q, IN)$ where (almost all) zeros of $\{P_n(z)\}$ lie on a fixed curve in the complex plane which is different from an affine line.  One can try to find such examples for relations of order $4$.

\smallskip
\noindent
{\bf 3.} Theorem~\ref{th:interval} apparently has a multivariate generalization when one considers a multivariate polynomial generating function for multinomial coefficients whose indices belong to a hyperplane given by an equation $x_1\ell_1+x_2\ell_2+\dots +x_u\ell_u=n$. It is very tempting to find this generalization and check whether it leads to further implications related to  Problem~\ref{prob:stand} and its version $\grave{a}$ la Tran.

\end{document}